\documentclass[10pt]{amsart}

\usepackage{amsmath, amssymb, amscd}
\usepackage{eucal}
\usepackage{mathrsfs}
\usepackage[frame,cmtip,arrow,matrix,line,graph,curve]{xy}
\usepackage{graphicx}

\usepackage[colorlinks=true,citecolor=red,linkcolor=blue]{hyper ref}

\newcommand{\cL}{\mathcal L}

\newcommand{\cN}{\mathcal N}

\newcommand{\cO}{\mathcal O}

\newcommand{\hgamma}{\hat{\gamma}}
\newcommand{\hcL}{\hat{\cL}}
\newcommand{\hM}{\hat{M}}

\newcommand{\tiota}{{\tilde \iota}}
\newcommand{\tM}{\widetilde M}

\newcommand{\tcL}{{\widetilde {\cL}}}

\newcommand{\qq}{\mathbb Q}

\newcommand{\cc}{\mathbb C}
\newcommand{\rr}{\mathbb R}
\newcommand{\zz}{\mathbb Z}

\newcommand{\ep}{\epsilon}

\newcommand{\inv}{^{-1}}

\newtheorem{prop}{Proposition}[section]
\newtheorem{thm}[prop]{Theorem}
\newtheorem{lem}[prop]{Lemma}
\newtheorem{cor}[prop]{Corollary}

\theoremstyle{remark}
  \newtheorem{rk}[prop]{Remark}
  
\theoremstyle{definition}
 
 \newtheorem{defn}[prop]{Definition}

\numberwithin{equation}{section}

\begin{document}
\title[Dynamics of riemannian $1$-foliations on $3$-manifolds]
{Dynamics of riemannian $1$-foliations on $3$-manifolds}
\author{Jaeyoo Choy$^{\dagger}$ and Hahng-Yun Chu$^{\ast}$}
\address{Department of Mathematics, Kyungpook National University, 80 Daehak-ro, Buk-gu,
Daegu 41566, Republic of Korea}\email{jaeyoochoy@gmail.com\ (J. Choy)}
\address{Department of Mathematics, Chungnam National University, 79 Daehak-ro, Yuseong-Gu,
Daejeon 34134, Republic of Korea}\email{hychu@cnu.ac.kr\ (H.-Y. Chu)}

\subjclass[2010]{primary 37D05; secondary 37F75, 53C12}
 
\keywords{riemannian $1$-dimensional foliation, $3$-manifold, hyperbolic, recurrence point, $\omega$-limit set, attractor,  transversely holomorphic $1$-dimensional foliation}

\thanks{$^\dagger$ The first author}
\thanks{$^\ast$ The corresponding author}

\begin{abstract}
In this paper we study several dynamical properties of a riemannian $1$-dimensional foliation $\cL$ on an oriented closed $3$-manifold $M$.
Carri\`ere \cite{Ca} classified such pairs $(M,\cL)$.
Using the classification we describe in detail recurrence points, $\omega$-limit sets and attractors.
Finally,  using the fact that the Poincar\'e map on a tranversal surface for a riemannian $1$-dimensional foliation is an isometry, we show the nonhyperbolicity of $(M,\cL)$.

\end{abstract}

\maketitle

\thispagestyle{empty} \markboth{Jaeyoo Choy and Hahng-Yun Chu} {Dynamics of riemannian $1$-foliations on $3$-manifolds}

\section{Introduction}\label{sec: intro}

\medskip

This paper aims to understand dynamics of riemannian $1$-dimensional foliations ($1$-foliations for short) on $3$-manifolds.
We focus on the notions of hyperbolicity, recurrence and limit sets, especially $\omega$-limit sets and attractors, of the foliations on the phase spaces.
The study of the above concepts is quite classic and important in the field of dynamical systems.

Let $\cL$ be a $1$-foliation on an oriented closed connected $3$-manifold $M$.
We say that $\mathcal{L}$ is $\textit{riemannian}$ if there is a riemannian metric on the normal bundle in the sense of Carri\`ere (see Definition \ref{def: riemann} for the precise definition).
In \cite{Ca}, Carri\`{e}re classifies all the oriented closed $3$-manifolds equipped with a riemannian $1$-foliations.
Using this classfication we prove several dynamical properties of $(M,\cL)$.

Regarding continuous dynamical systems, we mainly deal with the study of behavior of $1$-foliations in a smooth manifold. On the systems, the notion of hyperbolicity is a special key to understand the figure of the given $1$-foliation. Note that a $1$-foliation is called \textit{Anosov $1$-foliation} on a manifold $M$ if the manifold $M$ is uniformly hyperbolic for the $1$-foliation \cite{An}. Under the assumption of uniform hyperbolicity, there are abundant information about the $1$-foliation to describe the behavior. See \cite{Bo} for details. Therefore we pursue direction towards dynamics equivalent to the concept of hyperbolicity on some region of a given manifold. For the purpose we approach through the concept of partially hyperbolic $1$-foliations in generic dynamics. In recent years, there appear plenty of results about this notion. See \cite{AMS}\cite{SSW}.

To be specific, recurrence and $\omega$-limit set are central properties in dynamical systems.
In analysis the behavior of the orbits of $1$-foliations, they play important roles.
In the theorems listed in the below, we first see that for a riemannian $1$-foliation on an oriented closed $3$-manifold $M$, the whole manifold $M$ turns out to be recurrence. And then we classify the $\omega$-limit sets of the phase space $(M,\cL)$.

\medskip

\begin{thm}\label{th: Rec th}

Assume that $\mathcal{L}$ is a
riemannian $1$-foliation on an oriented closed $3$-manifold $M$.
Then every point of $M$ is a recurrence point.
\end{thm}

\medskip

To study dynamical systems, another important aspect which is dealt with in this paper is attractor. In this paper, we follow Conley's definition of attractors. For the detail, see \cite{Co}.
In \cite{MP}, it is proved that in compact $3$-manifolds, the nontrival attractor is mixing for a generic $3$-dimensional $1$-foliations. In \cite{MPP}, the authors describe the robust transitivity for $3$-manifolds. In our paper we will see that a robust transitive set containing singularities of a $1$-foliation on a closed $3$-manifold is either a proper attractor or a proper repeller. Hence we conclude the nonexistence of proper attractor for a riemannian $1$-foliation on closed $3$-manifolds as follows.

\medskip

\begin{thm}\label{th: limit}

Let $(M,\cL)$ be as in Theorem \ref{th: Rec th}.
Then the $\omega$-limit sets are diffeomorphic to \textit{either} a circle $S^1$ \textit{or} a $2$-torus $T^2$ or $M$ itself.
And there does not exist any proper nonempty attractor.
\end{thm}

\medskip

In fact the above submanifolds $S^{1}, T^{2}$ in Theorem \ref{th: limit} will become clearer in \S\ref{subsec: Carriere} because these depend on the examples in Carri\`ere's list.

Our study on the riemannian $1$-foliations on $3$-manifolds is partly motivated by the classification theory of Brunella \cite{Br1} and Ghys \cite{Gh}.
In their works $\cL$ is assumed to be transversely holomorphic (see Definition \ref{def: trans holo} for the precise definition).
This notion was studied by Haefliger-Sundararaman \cite[\S1]{HS} in more general context.
In \cite{Br1}\cite{Gh}, Brunella and Ghys classify
all the transversely holomorphic $1$-foliations on closed $3$-manifolds in detail.
Specifically when $H^2(M,\cO)\neq0$, Ghys shows that all the transversely holomorphic $1$-foliations are riemannian.
Here $\cO$ denotes the sheaf of functions on $M$ constant along the leaves of $\cL$ and homomorphic in a transverse direction (see Definition \ref{def: cO}).
So our results are also true for any transversely holomorphic $1$-foliation with $H^{2}(M,\cO)\neq0$ (Corollary \ref{cor: summary}).

Finally the following theorem of the paper asserts the nonhyperbolicity of the foliations on $3$-dimensional manifolds.

\medskip

\begin{thm}\label{th: nonhyperbolic}
Let $(M,\cL)$ be as in Theorem \ref{th: Rec th}.
Then $(M,\cL)$ is nonhyperbolic with respect to any riemannian metric on $M$.
\end{thm}

\medskip

In the case $H^{2}(M,\cO)=0$, Brunella classifies all the pairs $(\hM,\hcL)$ where $\hM$ and $\hcL$ are obtained by `complexification of leaves' of Haefliger-Sundararaman \cite[Proposition 2.1]{HS}.
See \S\ref{sec: Brunella} for the further details.
However at this moment we do not understand the dynamical properties as in the above case $H^{2}(M,\cO)\neq0$.
\vskip.2cm

\textit{Contents of this paper.}
In \S\ref{sec: def and notation} we define riemannian $1$-foliation, transversely holomorphic $1$-foliation and above dynamical properties.
In \S\ref{sec: proof} we introduce Carri\`ere's classification and then prove the main theorems on the dynamical properties.
In \S\ref{sec: Brunella} we introduce some parts of Brunella and Ghys' works \cite{Br1}\cite{Gh}.
This section itself does not contain a new result but is required only for the reformulation of the main theorems in terms of transversely holomorphic foliations in our context.
Thus we introduce a relevant part of their results for completeness' sake although we do not treat Brunella's case $H^{2}(M,\cO)=0$.


\section{Definitions, notations and basic properties}\label{sec: def and notation}


In this section we set up basic definitions and notations and then give related properties.


\subsection{Definitions and notations for dynamical properties}

Let $\cL:\rr\times M\to M$ be a $1$-foliation on Hausdorff topological space $M$.
We denote by $\cL^t:=\cL(t,\bullet):M\to M$ for short.
We define the $\textit{$\omega$-limit set}$ and
$\textit{$\alpha$-limit set}$ of $q $ by
$$ \omega(q):= \{ x \in M : x=\lim_{n\rightarrow\infty} \cL^{t_n}(q)
\mbox{ \ for some sequence}\ t_{n}\rightarrow \infty \mbox{ as}\
n\rightarrow\infty \},$$
$$ \alpha(q):= \{ x \in M : x=\lim_{n\rightarrow\infty} \cL^{-t_n}(q)
\mbox{ \ for some sequence}\ t_{n}\rightarrow \infty \mbox{ as}\
n\rightarrow \infty \}.$$

\medskip

\begin{defn}
A point $x \in M$ is
\textit{$\omega$-recurrent} or \textit{positively recurrent} with
respect to $\cL^{t}$ if $x \in \omega (x)$ and is
\textit{$\alpha$-recurrent} or \textit{negatively recurrent} with
respect to $\cL^{t}$ if $x \in \alpha (x)$. A point $x \in M$ is
\textit{(Poincar\'e) recurrent} with respect to $\cL^{t}$ if $x$
is simultaneously positively and negatively recurrent with respect to
$\cL^{t}$.
\end{defn}

\medskip

\begin{defn}\label{def: hyp diffeo} Let $\cL:\rr\times M\to M$ be a $1$-foliation on a finite dimensional smooth manifold $M$. A compact $\cL$-invariant set, $\Lambda \subset M$, is called a \textit{hyperbolic set} for the $1$-foliation $\cL$
if there exist $C > 0$ and $0 < \lambda < 1$ such that for each elemnet $x$ of $\Lambda$, there exist a decomposition
$$
T_x M = E_x^{ss} \oplus E_x^{uu}  \oplus E_x^c
$$
such that $\partial_t \cL(t,x) | _{t=0} \in E_x^c-\{ 0 \}$, $\dim (E^c(x))=1$, $D_x \cL(t, x)(E_x^i) = E_x^i$ with $i= ss, uu$, and
$$
\parallel D_x \cL(t, x)|E_x^{ss} \parallel \leq C \lambda^t \textit{ \ for \ } t \geq 0,
$$
$$
\parallel D_x \cL(t, x)|E_x^{uu} \parallel \leq C \lambda^t \textit{ \ for \ } t \leq 0
$$
where $\parallel \cdot \parallel$ is a norm induced by the
riemannian metric.
If the whole manifold $M$ is a hyperbolic set for $\cL$, then the $\mathcal{C}^1$ $1$-foliation $\cL$ is called an $\textit{Anosov 1-foliation}$.

\medskip

Let $\cL$ be an Anosov $1$-foliation on a compact connected manifold $M$. The bundle $E^{ss}$ and $E^{uu}$ are called $\textit{strong stable bundle}$ and $\textit{strong unstable bundle}$ of $\cL$, respectively.
So one may say that a derivative $D_x \cL$ of a $1$-foliation $\cL$ is
\textit{eventually contracting} on $E_x^{ss}$ and \textit{eventually
expanding} on $E_x^{uu}$.

\end{defn}

On a closed manifold $M$, an Anosov $1$-foliation $\cL^t$ means that it is uniformly hyperbolic
on the whole manifold $M$.
Note that the hyperbolicity for diffeomorphisms is similarly defined by replacing $\cL^t$
into $f$ for $0<t\ll1$.
See the definition \ref{def: hyp diffeo}.
The above choice of $t$ depends on $v\in M$.

As aforementioned, another important part in the study of dynamical system is to understand the structures of the attractors and $\omega$-limit sets of the dynamical system (see e.g.\ \cite{Mi}\cite{CM}).
We basically follow Conley's definition of attractors in \cite{Co} but slightly modified.
Note that even before Conley's definition many other
definitions of attractor can be found in several papers (cf.\ \cite{Mi}).
See also \cite{Choy} for relations among them.
Now we define an attractor.

\smallskip

\begin{defn}\label{def: attractor}
Let $(X,d)$ be a metric space, and $\phi^t$ be a continuous $1$-foliation
on $X$. A nonempty open subset $U$ of $X$ is an \textit{attractor
block} for $\phi^t$ if $\overline{\phi^t(U)}\subseteq U$ for every
$t>0$. A proper subset $A$ of $X$ is called an \textit{attractor} for $\phi^t$
if there exists an attractor block $U$ satisfying
$$A=\bigcap_{t\geq 0}\overline{\phi^{t}(U)}.$$
\end{defn}


\subsection{Definitions and basic properties for riemannian $1$-foliations and transversely holomorphic $1$-foliations}
\label{subsec: riemannian $1$-foliation}

In this subsection we define and study riemannian $1$-foliations, transversely holomorphic $1$-foliations and related properties.
We reemphasize that the readers can skip `transversely holomorphic' if they are not interested in the reformulations of the main theorems in terms of transversely holomorphic foliations.

Let $M$ be a smooth manifold.
Let
$\cL$ be a nondegenerate $1$-foliation on $M$, i.e.\
a nowhere vanishing smooth section of the tangent bundle $TM$.
The corresponding $1$-foliation $\cL^t$ has nowhere vanishing derivative with respect to $t$.

We set up notation for Poincar\'e (first-return) map.
For simplicity we assume $M$ is of dimension $3$ from now on.
Our argument also works for any dimension.
Fix a point $x\in M$.
Let $D_x$ be any embedded unit disk in $M$ centered at $x$.
We further assume any tangents of $D_x$ are transverse to $\cL$.
Let $t_0>0$ such that $\cL^{t_0}(x)\in D_x$ but $\cL^t(x)\notin D_x$ for any $t\in(0,t_0)$.
Then there is an open neighborhood $U$ of $x$ in $D_x$ such that the map assigning $y\in U$ to the first touching point $\cL^t(y)$ in $D$ $(t>0)$ is a diffeomorphism from $U$ onto the image.
We denote this map by 
	$$
	\rho_{D_x}: U\to D_x
	$$ 
(called the \textit{Poincar\'e map}).

To define riemannian $1$-foliations, first we need to introduce a pseudo-group.
Let $D_0$ be the unit disk in $\rr^2$ and $\iota_{x}:D_0\to D_x$ be the diffeomorphism for the above embedded disk $D_x$ for each $x\in M$.
Let $\ep_x>0$ such that $\iota_x$ extends to an (unique) open embedding
    $$
    \tiota_x:D_0\times(-\ep_x,\ep_x)\to M\ \mbox{with}\  \tiota_x(y,t)=\cL^t(\iota_x(y))\  (t\in(-\ep_x,\ep_x))
    $$
for all $y\in D_0$.
I.e., the $(-\ep_x,\ep_x)$-direction is parallel to the $1$-foliation of $\cL$.
We denote by $U_x$ the $\tiota_x$-image open subset of $M$.
It is clear that $\{U_x\}_{x\in M}$ is an open cover of $M$.
Let $p_x:U_x\to D_0$ be the composite of $\tiota_x|_{U_x}^{-1}$ with the projection to $D_0$.
Let $\gamma_{xy}$ be the composite $p_x\circ\iota_y$.
We need a care here: this composition is \textit{not} well-defined over $D_0$ but only over the (possibly empty) subset $\iota_y^{-1}(U_x)$.

We define the composition $\gamma_{zx}\circ\gamma_{xy}\, (= p_z\circ \iota_x\circ p_x\circ \iota_y)$ by shrinking domains appropriately.

\medskip

\begin{lem}\label{lem: composition}
By shrinking domains appropriately as above we have
$p_z=p_z\circ \iota_x\circ p_x$.
\end{lem}

\smallskip

\begin{proof}
We notice that under the identification via $\tiota_x$, $p_x$ maps any point $x'$ to the unique intersection point $x''$ in $D_x$ with the leaf containing $x'$.
Similarly
$p_z$ maps $x''$ to another intersection point in $D_z$ with the leaf containing $x''$.
It is clear that both leaves are same as they contain $x''$.
Therefore we obtain the equivalence of the maps in the lemma.
\end{proof}

\medskip

By this lemma the composite $\gamma_{zx}\circ\gamma_{xy}$ coincides with $\gamma_{zy}$ over an open subset in $D_0$.
Consequently we have the group law in the set
    $$
    \Gamma:=\{\gamma_{xy}\}_{x,y\in M}
    $$
with the obvious identity and inverses.
This is called a (holonomy) pseudo-group.
The name `holonomy' pseudo-group will become clearer due to Lemma \ref{lem: riemann}.

\medskip

\begin{defn}\label{def: riemann}
We say $\cL$ is \textit{riemannian} if there exist embeddings $\iota_x: (D_0,0)\to (M,x)$ $(x\in M)$ and a riemannian metric on $D_0$ invariant under all the locally defined diffeomorphisms in the associated pseudo-group $\Gamma$.
\end{defn}

\medskip

We construct a fibre-wise metric of the normal bundle $\cN$ of $\cL$ when $\cL$ is riemannian.
First note that $D_0$ forms a disk bundle on $M$ using the transitions $\gamma_{xy}$.
Since $\gamma_{xy}$ preserves the metric $g$ on $D_0$, the disk bundle has the induced fibre-wise metric.
Since $D_x$ transverses the $1$-foliation direction of $\cL$ for each $x\in M$, the disk bundle is naturally isomorphic to $\cN$.

\medskip

\begin{lem}\label{lem: riemann}
If $\cL$ is riemannian, there is the induced smoothly varying fibre-wise metric on $\cN$.
\end{lem}

\begin{proof}
The unproven part is the smoothness.
But this is clear by giving the same metric $g$ on ${t}\times D_0$ for each $t\in(-\ep_x,\ep_x)$.
\end{proof}

\medskip

We define transversely holomorphic $1$-foliations following \cite[\S1]{HS}.

\medskip

\begin{defn}\label{def: trans holo}
We identify $\rr^2$ with $\cc$ and thus $D_0$ is an analytic open subset.
$\mathcal{L}$ is \textit{transversely holomorphic} if it is nondegenerate and there exist embeddings $\iota_x: (D_0,0)\to (M,x)$ $(x\in M)$ satisfying $\gamma_{xy}$ of the pseudo-group $\Gamma$ are all holomorphic maps.
\end{defn}

\medskip

\begin{rk}
Due to the orientations of $\cc$ and $\rr$, any transversely holomorphic $1$-foliation induces orientation of $M$.
\end{rk}

\medskip


\section{Dynamical properties of transversely holomorphic $1$-foliations \label{sec: appl.}}\label{sec: proof}

We prove the various dynamical properties of transversely holomorphic $1$-foliations stated in Introduction (\S1).

\subsection{Proof of Theorems \ref{th: Rec th} and \ref{th: limit} via the classification of Carri\`ere}
\label{subsec: Carriere}

Now we prove Theorems \ref{th: Rec th} and \ref{th: limit} using Carri\`ere's classification of the pairs $(M,\cL)$ of a closed $3$-manifold and a riemannian flow.
We need a lemma:

\medskip

\begin{lem}\label{lem: quotient}
Let $(\tM,\tcL)$ be a pair of a smooth manifold and a $1$-foliation.
Suppose that a discrete group $G$ acts freely on $\tM$.
Let $M:=\tM/G$ the group quotient and $\phi\colon \tM\to M$ be the quotient map.
Assume also that there is a $1$-foliation $\cL$ on $M$ such that $\phi$ maps the flow $\tcL^t$ to $\cL^t$.
Then we have the following:

$(1)$
The $\phi$-image of a recurrence point of $(\tM,\tcL)$ (resp.\ an $\omega$-limit set and an attractor) is also a recurrence point $(M,\cL)$ (resp.\ an $\omega$-limit set and an attractor).
And any attractor of $(M,\cL)$ is given as the $\phi$-image of some attractor of $(\tM,\tcL)$.

$(2)$
Moreover if $G$ is a finite group, an $\omega$-limit set of $(M,\cL)$ is the $\phi$-image of some $\omega$-limit set of $(\tM,\tcL)$.
\end{lem}

\proof
The former item (1) is clear.
We prove the latter item (2).
Let $A$ be an attractor of $(M,\cL)$.
Since $\phi$ is a covering map, $\phi\inv(A)$ is also an attractor.
We consider an $\omega$-limit set $\omega(x)$ where $x\in M$.
We choose any $y\in \phi\inv(x)$.
We claim that the $\phi$-image of $\omega(y)$ coincides with $\omega(x)$.
First we observe that $\phi(\omega(y))\subset \omega(x)$.
Indeed for any point $z\in \omega(y)$ there exists a sequence $t_n$ with $z=\lim_{n\to\infty}\tcL^{t_n}(y)$.
Thus $\phi(z)=\lim_{n\to\infty}\cL^{t_n}(x)$.
We prove the opposite inclusion $\phi(\omega(y))\supset \omega(x)$.
Let $z\in \omega(x)$.
Then there exists a sequence $t_n$ with $z=\lim_{n\to\infty}\cL^{t_n}(x)$.
By the unique lifting property of covering space of a path, $\cL^t(x)$ lifts to $\tcL^t(y)$ for any given $y\in\phi\inv(x)$.
Since a $\phi$-fibre is a finite set, there exists a subsequence $t_{n_k}$ with a limit $\lim_{n\to\infty}\tcL^{t_{n_{k}}}(x)$.
Since $\phi$ is a local isomorphism, it is clear that the $\phi$-image of the limit coincides with $z$.
\qed\vskip.3cm

Any pair $(M,\cL)$ of a closed $3$-manifold and a riemannian flow is one of the following list \cite{Ca}:

\begin{enumerate}
\item
$M$ is a $3$-torus $T^3$
and $\cL$ is linear with an irrational slope on $T^3$ (i.e.\ the corresponding flow gives translation of $T^3$).
\item
$M$ is also $T^3$.
But there are precisely two possibilities of $\cL$ as follows:
\begin{itemize}
\item[(a)]
Let $\cL'$ be a linear $1$-foliation on a $2$-torus $T^2$ with an irrational slope.
Let $M:=T^2\times S^1$.
Then $\cL$ is the foliation such that for each $x\in S^1$, the induced flow $\cL^t$ of $\cL$ lies in $T^2\times\{x\}$ and it coincides with $(\cL')^t$.
\item[(b)]
We fix any $a\in SL_2(\zz)$ with eigenvalues $\lambda,\lambda\inv$ where $\lambda>1$.
We denote the corresponding eigenvectors by $v,v'\in\rr^2$ respectively.
We consider the linear foliation $\cL'$ on $\rr^2$ whose time-$1$ map maps $0$ to $v$.
Since $\lambda$ is known be an irrational number, $\cL'$ has an irrational slope.
We consider a $\zz^2$-action and a $\zz$-action on $\rr^3=\rr^2\times \rr^1$ as follows:
The $\zz^2$-action comes from the standard affine translation group action on the first factor $\rr^2$ and the $\zz^1$-action is defined by $n.(m,t)=(a^n(m),t+n)$.
The $(\zz^2\ltimes\zz)$-quotient of $\rr^3$ becomes a $3$-torus.
Let $M$ be this $3$-torus.
Let us construct a $1$-foliation $\cL$ on $M$ as follows:
Let $\cL''$ be the foliation such that for each $x\in \rr^1$, its flow $(\cL'')^t$ lies in $\rr^2\times\{x\}$ and it coincides with $(\cL')^t$.
Now $\cL$ is set to the induced $1$-foliation from $\cL''$ on the discrete group quotient $M$.
\end{itemize}
\item
there are two cases:
\begin{itemize}\item[(a)] $M$ is a lenz space $L_{p,q}$ ($p,q\in \zz\setminus0$).
It is defined as the quotient of the $3$-sphere
    $$
    S^3=\{(z_1,z_2)\in \cc^2|\ |z_1|^2+|z_2|^2=1\}
    $$
by a $\zz^1$- action
    $$
    n.(z_1,z_2)= \left(e^\frac{2n\pi i}{p}z_1,e^\frac{2n\pi i}{q}z_2\right).
    $$
Let $\lambda,\mu\in\rr\setminus\qq$ with $\lambda/\mu\in\rr\setminus \qq$.
Let $\cL'_{(\lambda,\mu)}$ be the foliation on $S^3$ whose corresponding flow is given by
    $$
    \cL^t(z_1,z_2)=(e^{i\lambda t} z_1,e^{i\mu t}z_2),\quad t\in\rr.
    $$
Note that $\lambda/\mu$ is the slope of $\cL'_{(\lambda,\mu)}$.
Let $\cL$ be the induced foliation on the discrete group quotient $M$ from $\cL'_{(\lambda,\mu)}$.
\item[(b)]
Let $M=S^2\times S^1$.
Fixing the north and south poles of $S^2$ we consider any flow on $S^2$ given by rotation.
Let us consider the corresponding foliation $\cL'$ on $S^2$.
Let $\cL$ be the foliation such that for $x\in S^1$, its flow $\cL^t$ lies in $S^2\times\{x\}$ and it coincides with $(\cL')^t$.
\end{itemize}
\item
$M$ is a Seifert fibration, i.e.\
an $S^1$-fibration over a smooth $2$-manifold.
And $\cL$ is a foliation such that its flow $\cL^t$ lies in the fibre direction.
\end{enumerate}

We proceed the proof of Theorems \ref{th: Rec th} and \ref{th: limit} in each case.

In the case (1) all the positive orbits of the corresponding flow $\cL^t$ to $\cL$ are dense.
Thus every point of $M$ is a recurrence point.
This completes the proof of Theorem \ref{th: Rec th} in this case.
By the same reason the $\omega$-limit set of any point coincides with $M$ itself.
Similarly no nonempty proper subset of $M$ can be an attractor.
This completes the proof of Theorem \ref{th: limit} in this case.

We prove the case (2)(b).
We observe that $M$ is a (nontrivial) $T^2$-fibration over $S^1$.
To be precise the projection $\rr^2\times \rr^1\to\rr^1$ induces $\phi\colon M=(\rr^2\times\rr^1)/(\zz^2\ltimes\zz) \to \rr^1/\zz$ where the $\zz$-action on $\rr^1$ is the standard translation action.
The fibres of the induced map are $T^2=\rr^2/\zz^2$.
By construction of $\cL$ every positive orbit closure becomes the $T^2$-fibre itself containing the orbit.
Thus every point of $M$ is a recurrence point.
By a similar argument in the case (1) we also deduce that the $\omega$-limit set of any point is the $T^2$-fibre containing the point.

Now we prove that there are no nonempty proper attractors in the case (2)(b).
Let $A$ be a nonempty proper attractor if any.
Let $U$ be an attractor block of $A$.
We take an open subset $V$ in the base $S^1$ such that $\phi|_{\phi\inv(V)}$ gives a trivial $T^2$-fibration $\phi\inv(V)$ intersects $A$ but is not contained in $A$.
We denote by $A',U'$ the intersections of $A,U$ with $\phi\inv(V)\cap A,\phi\inv(V)\cap U$ respectively.
Let $p\in A'$.
Since any positive orbit lies in a $T^2$-fibre, $A'$ contains the $\omega$-limit $\omega(p)$.
Thus $A'$ should be of the form $T^2\times B$ for some proper relatively closed subset $B$ in $V$.
Then there exists a proper open subset $V'$ in $V$ with $B\subset V'$.
Thus we have
    $$
    A'=\bigcap_{t\ge0}\overline{\cL^t(U)}
    =T^2\times\left(\bigcap_{t\ge0}\overline{\cL^t(V')}\right) =T^2\times V'.
    $$
Here we used $\cL^t(V')=V'$ for any $t\ge0$ since the flow is parallel to the $T^2$-fibres.
This is contradiction because $T^2\times V'$ properly contains $A'$.

The case (2)(a) is easier because the $T^2$-fibration is trivial.
Thus a similar argument also works as in the case (2)(b)
This completes the proof of Theorems \ref{th: Rec th} and \ref{th: limit} in the case (2).

Let us prove the case (3)(a).
Assume first $p=q=0$ so that $M=S^3$ (although this is not a subcase of (3)(a)).
Let
    $$
    T_k=\left\{(z_1,z_2)\in S^3\middle|
    |z_1|^2=k,\ |z_2|^2=1-k\right\}
    $$
where $0\le k\le 1$.
Note that the flow $\cL^t(x)$ for any $x\in T_k$ lies in $T_k$ for each $k$.
If $k=0,1$ then $T_k=S^1$ and it coincides with the positive orbit.
Thus every point of $T_k$ is a recurrence point and its $\omega$-limit set is $T_k$ itself.
If $0<k<1$ then $T_k=T^2$ and any positive orbit in $T_k$ is dense in $T_k$ because $\cL^t$ has an irrational slope $\lambda/\mu$ by the assumption.
Thus every point of $T_k$ is also a recurrence point and its $\omega$-limit set is $T_k$ itself.
Let us show there is no nonempty proper attractor.
Suppose not, so there exists a nonempty proper attractor $A$.
Let $U$ be an attractor block of $A$.
Note that the map $\phi:M=S^3\to [0,1],\ (z_1,z_2)\mapsto |z_1|^2$ restricts to $T^2$-fibration over the smaller base $(0,1)$.
Note that $T_k=\phi\inv(k)$.
We denote by $A', U'$ the intersection of $A,U$ with this $T^2$-fibration respectively.
Then $U'$ is also an attractor block of $A'$ with respect to the restricted foliation.
Note that $U'$ is nonempty since $\dim U'=3$ but $M\setminus \phi\inv(0,1)=T_0\sqcup T_1$ has dimension $1$
However $A'$ is possibly empty.
By a similar argument in the product space as in the above proof of (2)(b), $\bigcap_{t\ge0}\overline{\cL^t(U')}$ properly contains $A'$ because $\phi$ restricts to the trivial fibration in our case.
This contradiction completes the proof of Theorem \ref{th: limit} in this case.
Now we get back to the original assumption $(p,q)\neq(0,0)$.
Then $M$ is a finite group quotient of $S^3$ and thus the same result as above holds after passing to the quotient due to Lemma \ref{lem: quotient}.

In the case (3)(b) it is clear that every point is a recurrence point since the flow is induced from rotation of the $S^2$-factor.
The description of $\omega$-limit sets is also clear:
If $p\in M=S^2\times S^1$ projects to the north or south pole of $S^2$, $\omega(p)$ is $S^1$.
Otherwise it is $T^2$.
There is no nonempty proper attractor by a similar argument in the above proof of the case $p=q=0$.
We omit the details.

In the case (4), since  $\cL^{t}(x)$ for any $x\in M$ lies in the $S^1$-fibre containing $x$, the positive orbit of each $p\in M$ is the $S^1$-fibre containing $p$.
Thus $p$ is a recurrence point and $\omega(p)$ is the $S^1$-fibre.
There is no nonempty proper attractor by a similar argument in the product space in the above.
This completes the proof of Theorems \ref{th: Rec th} and \ref{th: limit}.

\subsection{Nonhyperbolicity and proof of Theorem \ref{th: nonhyperbolic}}

We first define hyperbolicity for a diffeomorphism from $M$ to itself.

\medskip

\begin{defn} Let $(M,f)$ be a pair of a finite
dimensional smooth manifold and a
diffeomorphism on $M$.
We say that $M$ has a \textit{hyperbolic
structure} with respect to $f$ if there is a riemannian metric on $M$ and
a continuous splitting of $TM$ into the direct sum of
$Tf$-invariant subbundles $E^{s}$ and $E^{u}$ such that for some
constants $A$ and $0<\lambda<1$ and for all $v \in E^{s}$, $w \in
E^{u}$ and $n\geq 0$,
\begin{equation}\label{eq21}
\parallel Tf^{n}(v)\parallel \leq A \lambda^{n}  \parallel v \parallel, \ \
\parallel Tf^{-n}(w)\parallel \leq A \lambda^{n}  \parallel w \parallel,
\end{equation}
where $\parallel \cdot \parallel$ is a norm induced by the
riemannian metric. Thus one may say that $Tf$ is
\textit{eventually contracting} on $E^{s}$ and \textit{eventually
expanding} on $E^{u}$.

A \textit{hyperbolic subset} of $M$ with respect to $f$ is a
closed invariant subset of $M$ with hyperbolic structure with respect to the restriction of $f$.
\end{defn}

\medskip

To prove Theorem \ref{th: nonhyperbolic} we need a lemma:

\medskip

\begin{lem}\label{lem: holo}
Suppose $M$ is a closed $3$-manifold and $\cL$ is riemannian.
The Poincar\'e map $\rho_{D_x}$ on the embedded disk $D_x=\iota_x(D_0)$ for any $x\in M$ is an isometry where $\iota_x$ is given as in Definition \ref{def: riemann}.

Hence it is nonhyperbolic.
\end{lem}

\begin{proof}
First we notice that the $\rho_{D_x}$ is not vacuous for some $x$.
This comes from the generality that the compactness of $M$ implies the existence of recurrent point (ref.\ \cite[pp.101]{AS}).
In fact every point of $M$ is a recurrent point as we will see in \S\ref{subsec: Carriere}.
Thus for any point $x\in M$, $\rho_{D_x}$ is defined.

By Lemma \ref{lem: riemann} the induced metric $D_x$ does not depend on $x$.
This implies $\rho_{D_{x}}$ is an isometry.

The nonhyperbolicity is clear from isometry.
\end{proof}

\medskip

Let $\cO$ be a sheaf of germs of functions on $M$
which are constant along the leaves and holomorphic in the
transverse direction.

\medskip

\begin{thm}[Ghys' theorem \cite{Gh}]
Let $\mathcal{L}$ be a transversely holomorphic foliation on a closed $3$-manifold $M$. If $H^{2}(M,\cO)\neq 0$, $\mathcal{L}$ is riemannian. \vskip.3cm
\end{thm}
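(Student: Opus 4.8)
The plan is to recast the conclusion that $\cL$ is riemannian as a cohomological solvability statement and then to feed the hypothesis $H^2(M,\cO)\neq 0$ into it through Kodaira--Serre duality. By Definition \ref{def: riemann}, $\cL$ is riemannian precisely when the transverse space $T$ carries a $\Gamma$-invariant metric; writing such a metric as $\rho\,|dz|^2$ in a transverse holomorphic coordinate $z$, invariance under the holonomy generators amounts to solving $(\rho\circ\gamma_{ij})\,|\gamma_{ij}'|^2=\rho$ for a positive basic function $\rho$. Equivalently, one must produce a \emph{holonomy-invariant} Hermitian metric on the transverse holomorphic normal bundle $\nu$ of $\cL$. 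A Hermitian metric exists trivially by a partition of unity on $M$; the whole difficulty is the invariance (basic-ness), and its obstruction is exactly what the cohomology of $\cO$ measures.

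First I would set up the foliated Dolbeault complex resolving $\cO$. Since $\cL$ is transversely holomorphic of complex codimension one and the leaf direction is one real dimension, $\cO$ is the joint kernel of the transverse operator $\bar\partial$ (in $d\bar z$) and the leafwise derivative along the flow. Resolving by the total complex of these two operators and invoking a Dolbeault-type theorem, I would represent $H^2(M,\cO)$ by smooth transverse $(0,1)$-forms that are leafwise of top degree, modulo the image of the differential. At this stage I would import Brunella's harmonic transition data (\cite{Br1}): after arranging the transition cochain to be harmonic along the flow and holomorphic in the transverse direction, the associated Laplacian becomes elliptic and the usual Hodge decomposition applies to the relevant complex, even though the holomorphic data obtained by adjoining a harmonic conjugate fail the cocycle condition and hence do not define a genuine complex surface.

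With Hodge theory in force, I would apply Kodaira--Serre duality to the foliated complex to obtain $H^2(M,\cO)^{*}\cong H^0(M,\mathcal{K})$, where $\mathcal{K}\cong \nu^{*}$ is the transverse canonical sheaf of basic transversely holomorphic $1$-forms. The hypothesis $H^2(M,\cO)\neq 0$ therefore yields a nonzero $\Gamma$-invariant transverse holomorphic $1$-form $\eta=\phi\,dz$. If $\eta$ is nowhere zero, then $|\eta|^2=|\phi|^2\,|dz|^2$ is a $\Gamma$-invariant transverse metric and $\cL$ is riemannian, completing the argument. To remove the zeros I would argue dynamically: the zero set of $\eta$ is a closed, flow-invariant, transversally analytic (hence proper) subset of $M$, and since every flow on a compact manifold has a nonempty recurrent set (as already used in the proof of Theorem \ref{thm: holo}), such a proper invariant analytic set cannot absorb the recurrent dynamics and must be empty.

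I expect the main obstacle to be the middle step: legitimizing Hodge theory and Kodaira--Serre duality in a setting where $M$ carries no honest complex (let alone K\"ahler) structure, only a transverse holomorphic one together with a flow. This is precisely the technical heart of Ghys' argument, and it is where Brunella's harmonic transition data are indispensable, since they supply the elliptic operator and the conjugate-linear pairing needed to dualize $H^2$ into a space of global sections. A secondary, more elementary difficulty is the clean exclusion of the zeros of $\eta$; if a purely dynamical argument proves delicate, one can instead observe that a nonzero invariant section trivializes $\nu^{*}$ off an effective invariant divisor $D$, and that compactness together with the transverse holomorphy forces $D=\varnothing$.
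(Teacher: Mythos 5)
First, a point of comparison: the paper itself gives no proof of this statement --- it is quoted as a black box from Ghys [Gh], with only a one-sentence sketch in the introduction (Brunella's harmonic transition data plus Dolbeault/Kodaira--Serre techniques). Your proposal reconstructs that route: represent $H^2(M,\cO)$ by an elliptic foliated complex, dualize to obtain a nonzero holonomy-invariant transversely holomorphic $1$-form $\eta$, and take $|\eta|^2$ as the invariant transverse metric. Up to and including the duality step this is indeed the skeleton of Ghys' argument (and in fact the complex built from the leafwise derivative and the transverse $\bar\partial$ is elliptic on its own, so Hodge theory and Serre duality apply without any appeal to harmonic transition data).

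The genuine gap is your final step: the claim that the zero set of $\eta$ must be empty is simply false, and neither of your two arguments for it can be repaired. Concrete counterexample: let $M=\Sigma_2\times S^1$ with $\Sigma_2$ a genus-two Riemann surface, and let $\cL$ be the flow rotating the $S^1$ factor. This flow is transversely holomorphic and riemannian, and a Leray/K\"unneth computation gives $H^2(M,\cO)\cong H^1(\Sigma_2,\cO_{\Sigma_2})\otimes H^1(S^1,\cc)\cong\cc^2\neq 0$; yet the holonomy-invariant transversely holomorphic $1$-forms are exactly the pullbacks of holomorphic $1$-forms on $\Sigma_2$, every one of which has $2g-2=2$ zeros. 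So $\eta$ necessarily vanishes along two closed orbits. This also exposes why your dynamical argument fails: the zero set of $\eta$ is a finite union of closed orbits, and points on closed orbits are recurrent, so there is no tension at all between recurrence and a nonempty proper invariant analytic set; likewise ``compactness forces $D=\varnothing$'' is false, since on a compact transverse Riemann surface of higher genus effective canonical divisors are unavoidable. The correct continuation (and the actual content of Ghys' proof beyond duality) is to analyze the zeros rather than exclude them: near a zero of order $k$ one may normalize $\eta=z^k\,dz$, and invariance forces any holonomy return map $\gamma$ fixing the zero to satisfy $\gamma(z)^{k+1}=z^{k+1}$, hence $\gamma(z)=\zeta z$ with $\zeta^{k+1}=1$, i.e.\ the local holonomy is a finite rotation group. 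Thus $|\eta|^2$ is an invariant flat transverse metric with conical singularities along finitely many closed orbits having finite holonomy, and one obtains a genuine invariant transverse riemannian metric by averaging a smooth metric over the finite local holonomy near those orbits and patching with a basic partition of unity. Without this cone-point analysis, your proof only covers the special case in which $\eta$ happens to be nonvanishing.
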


Combining Lemma \ref{lem: holo} and Ghys' theorem we have an immediate corollary:

\medskip

\begin{cor} \label{thm: nonhyperbolicitysheaf}
Let $\mathcal{L}$ be a
transversely holomorphic foliation on a closed
$3$-manifold $M$. If $H^{2}(M, \cO)\neq 0$, the Poincar\'e map $\rho_{D_x}$
for each recurrent point $x$ is nonhyperbolic with respect to the induced riemannian metric.

Hence $\rho_{D_x}$ is nonhyperbolic with respect to any riemannian metric.
\end{cor}

\medskip

Now we prove Theorem \ref{th: nonhyperbolic}.
\vskip.3cm

\textit{Proof of Theorem \ref{th: nonhyperbolic}.}
We notice that a tubular neighborhood of the path defining $\rho_{D_x}$ has the induced riemannian metric on $M$ as it is diffeomorphic to $\cN$ along the path.
From this metric we can construct a metric on $M$ such that its restriction to a smaller tubular neighborhood of the path still has the same induced metric.
This is obtained by using the standard argument of partition of unity, so we omit the detail.

By Lemma \ref{lem: holo}, $\rho_{D_x}$ is nonhyperbolic and thus so is
$(M,\cL)$ with respect to the metric.
In general the inequality \eqref{eq21} is independent of the choice of metric.
So $(M,\cL)$ is also nonhyperbolic with respect to any riemannian metric.
This completes the proof of the theorem.
\qed


\section{Transversely holomorphic $1$-foliations}\label{sec: Brunella}

In this section we consider transversely holomorphic $1$-foliations instead of riemannian foliations and then deduce Corollary \ref{cor: summary} in the below from Theorems \ref{th: nonhyperbolic}--\ref{th: limit}.
The proof itself was already given in \S\ref{sec: intro} (after the statement of Theorem \ref{th: limit}) using the known fact \cite{Gh}: $H^2(M,\cO)\neq0$ implies $\cL$ is riemannian.
So it remains to explain necessary notions around the sheaf $\cO$.
For the purpose we need to introduce relevant parts in Brunella and Ghys' works \cite{Br1}\cite{Gh}.


\subsection{Complexification of leaves and the sheaf $\cO$}

We fix $(M,\cL)$ where $\cL$ is a transversely holomorphic $1$-foliation.
Ghys and Brunella used the harmonic transition data of $(M,\cL)$ whose local version appeared as the complexification of leaves of Haefliger-Sundararaman  \cite[Proposition 2.1]{HS}.

We use the notation in \S\ref{subsec: riemannian 1-foliation}.
We will take soon an open embeddings $\tiota_{x}\colon D_{0}\times (-\ep_{x},\ep_{x})\to M$ where $x\in M$ so that they form \textit{harmonic atlas} of $M$ in the sense of Brunella \cite{Br1}.
But for any choice of $\tiota_{x}$ we denote the image of $\tiota_{x}$ and the open subset $p_{x}(U_{x}\cap U_{y})$ of $D_{0}$ by $U_{x}$ and $D_{xy}$ respectively.

\medskip

\begin{prop}\label{prop: harmonic}\cite[\S1]{Br1}
If $\cL$ is a transversely holomorphic $1$-foliation, there exist $\tiota_{x}$ $(x\in M)$ such that the functions $\gamma_{xy}$ and $\tilde h_{xy}$ are all holomorphic and harmonic respectively where
	$$
	\begin{aligned}
	\tiota_x\inv\circ\tiota_y
	\colon
	&
	D_{xy}\times (-\epsilon_{y},\epsilon_{y})\to D_{xy}\times (-\epsilon_{x},\epsilon_{x}),
	\\
	&
	(z,t)\mapsto (\gamma_{xy}(z),t+\tilde{h}_{xy}(z)).
	\end{aligned}
    	$$
\end{prop}

\medskip

\begin{defn}\label{def: cO}
$\cO$ denotes the sheaf on $M$ such that for each open subset $U$ of $M$, $\cO(U)$ is the set of functions on $U$ such that $f\circ \tiota_{x}|_{\tiota_{x}\inv(U_{x}\cap U)}$ is  holomorphic on $D_{0}$ and constant along $(-\ep_{x},\ep_{x})$ (the flow direction) for each $x\in M$.
\end{defn}

\medskip

By compactness of $M$ we take finitely many $x_{i}$ such that $U_{x_{i}}$ forms an open cover of $M$.
Accordingly we change the index $x$ so that $\gamma_{ij}$ denotes $\gamma_{x_{i}x_{j}}$, etc.
Let $\ep:=\min_{i}(\ep_{x_{i}})$.
Let $\tilde k_{ij}$ be a harmonic conjugate of $\tilde{h}_{ij}$ so that $\tilde h_{ij}+\sqrt{-1}\tilde k_{ij}\colon (-\ep,\ep)\times S^{1}\to\cc$ is holomorphic where $ (-\ep,\ep)\times S^{1}$ is identified as an annulus in $\cc$.
Thus we have extension of $\tiota_{i}\inv\circ\tiota_{j}$ as
	$$
	\begin{aligned}
	\hgamma_{ij} \colon
	&
	D_{ij}\times(-\epsilon,\epsilon)\times S^1\to D_{ij}\times(-\epsilon,\epsilon)\times S^1,
	\\
	&
	(z,t,s)\mapsto (\gamma_{ij}(z),t+\tilde{h}_{ij}(z),s+\tilde{k}_{ij}(z))
	\end{aligned}
	$$
(see \cite[p.276]{Br1}).


\subsection{Results of Ghys and Brunella}

In general $\hgamma_{ij}$ do \textit{not} satisfy the $1$-cocycle condition:
    $$
    \hat{\gamma}_{ik}=\hat{\gamma}_{ij}\circ\hat{\gamma}_{jk}.
    $$
By \cite[Proposition 3]{Br1} if $H^{2}(M,\cO)=0$, they form a $1$-cocycle.
In other words the compact complex surface $\hM$ given by the transitions $\hgamma_{ij}$ is naturally an $S^{1}$-fibration over $M$.
In addition there is a complexification $\hcL$ of $\cL$, i.e., a holomorphic $1$-foliation on $\hM$.
In the complex surface theory there are fundamental classification results of Kodaira--Enrique and Inoue \cite{In} for the compact complex surfaces with a non-degenerate holomorphic $1$-foliation.
Thus the list of $(M,\cL)$ in \cite[Theorem 1]{Br1} is obtained.

On the other hand if $H^2(M,\cO)\neq0$, $\cL$ is riemannian by \cite[Theorem 1.1]{Gh}.
This is how we deduced in \S\ref{sec: intro} the following:

\medskip

\begin{cor}\label{cor: summary}
Let $(M,\cL)$ be a pair of an oriented closed $3$-manifold and a transversely holomorphic $1$-foliation on $M$.
If $H^{2}(M,\cO)\neq0$, the assertions of Theorems \ref{th: limit} and \ref{th: nonhyperbolic} also hold.
\end{cor}


\end{document}